\documentclass{jT}

\usepackage[colorlinks=false, pdfstartview=FitV, linkcolor=blue,citecolor=black, urlcolor=blue]{hyperref}

\usepackage{amsmath,amsthm,amssymb,mathrsfs}
\usepackage{color}
\usepackage{hyperref}
\usepackage{url}
\usepackage{verbatim}

\newcommand{\bburl}[1]{\textcolor{blue}{\url{#1}}}


\newtheorem{thm}{Theorem}[section]

\newtheorem{cor}[thm]{Corollary}
\newtheorem{lem}[thm]{Lemma}
\newtheorem{prop}[thm]{Proposition}

\theoremstyle{definition}

\newtheorem{rem}[thm]{Remark}

\numberwithin{equation}{section}

\newcommand\be{\begin{equation}}
\newcommand\ee{\end{equation}}
\newcommand\ben{\begin{enumerate}}
\newcommand\een{\end{enumerate}}

\newcommand{\A}{\ensuremath{{\bf A}}}

\newcommand{\C}{\ensuremath{{\mathbb C}}}
\newcommand{\Z}{\ensuremath{{\mathbb Z}}}
\newcommand{\f}{\ensuremath{{\mathfrak f}}}
\newcommand{\GL}{\ensuremath{{\mathrm{GL}}}}
\newcommand{\n}{\ensuremath{{\mathfrak n}}}

\newcommand{\OO}{{\mathcal O}}

\newcommand{\SL}{{\mathrm{SL}}}

\numberwithin{equation}{section}

\setcounter{tocdepth}{1}

\begin{document}

\title{A Dedekind-Rademacher cocycle for Bianchi groups}

\author{Kim Klinger-Logan}
\address{Kim Klinger-Logan\\Kansas State University, 1228 N Martin Luther King Jr Dr, 138 Cardwell Hall, Manhattan, KS 66506, \\USA}
\email{kklingerlogan@ksu.edu}

\author{Kalani Thalagoda}
\address{Kalani Thalagoda\\Tulane University, 6823 St. Charles Avenue, New Orleans, LA 70118,\\USA}
\email{kthalagoda@tulane.edu}

\author{Tian An Wong}
\address{Tian An Wong\\University of Michigan-Dearborn, 4901 Evergreen Rd, 2002 CASL Building, Dearborn, MI 48128, \\USA}
\email{tiananw@umich.edu}

\subjclass[2020]{11F20 \and 11F37}

\keywords{Elliptic Dedekind sum, Bianchi modular forms, Dedekind-Rademacher cocycle}


\maketitle

\begin{resume}
Nous construisons une g\'en\'eralisation du cocycle Dedekind-Rademacher aux sous-groupes de congruence de $\mathrm{SL}_2(\mathbb C)$, et en d\'eduisons certaines de ses propri\'et\'es de base. En particulier, nous montrons qu'il param\'etre une famille de valeurs $L$ et prouvons l'int\'egralit\'e de ces valeurs.
\end{resume}

\begin{abstr}
We construct a generalization of the Dedekind-Rademacher cocycle to congruence subgroups of $\mathrm{SL}_2(\mathbb C)$, and derive some of its basic properties. In particular, we show that it parametrizes a family of $L$-values and prove the integrality of these values.
\end{abstr}


\section{Introduction}

\subsection{The classical Dedekind-Rademacher homomorphism}

Consider the map $\phi: \mathrm{SL}_2(\Z)\to \Z$ given by
$$
\phi\begin{pmatrix}a & b \\ c & d\end{pmatrix}
= \begin{cases}
      \dfrac{a+d}{c}-12\dfrac{c}{|c|}\cdot S(a/|c|) & c\neq 0 \\
      \dfrac{b}{d} & c=0 
    \end{cases},
$$
    where 
$$
S(a/c) =\sum_{k=1}^{c-1} B_1\left(\frac{k}{c}\right)B_1\left(\frac{hk}{c}\right),\qquad (a,c)=1
$$
is the classical Dedekind sum, with $B_1(x):=x-\lfloor{x}\rfloor-1/2.$ The Dedekind-Rademacher homomorphism $\phi_N: \Gamma_0(N)\to\Z$ is then defined in \cite{rademacher} as the difference
\begin{align*}
\phi_N\begin{pmatrix}a & b \\ c & d\end{pmatrix}
:= \phi\begin{pmatrix}a & bN \\ c/N & d\end{pmatrix}-\phi\begin{pmatrix}a & b \\ c & d\end{pmatrix}.
\end{align*}
Mazur \cite{Maz} gave an equivalent definition as follows, 
\begin{align*}
\phi_N\begin{pmatrix}a & b \\ Nc & d\end{pmatrix}&= \begin{cases}
      \frac{(N-1)(a+d)}{Nc}-12\frac{c}{|c|}S^N(a/N|c|) & c\neq 0 \\
      \frac{(N-1)b}{d} & c=0 
    \end{cases} 
\end{align*}
where $S^N(x)=S(x)-S(Nx)$ for any $x\in \mathbb Q$.  

For $N=p$ a prime and $\Gamma = \mathrm{SL}_2(\Z[1/p])$,  Darmon, Pozzi, and Vonk showed that $\phi_p$ induces a rigid meromorphic cocycle on $\Gamma$, called the Dedekind-Rademacher cocycle \cite{DPV}. At certain real quadratic arguments the values at $\phi_p$ describe global $p$-units in the narrow Hilbert class field of the associated real quadratic field. 

\subsection{Generalizing to Bianchi groups}

In this note, we construct a generalization, $\Phi_N$, of the Dedekind-Rademacher homomorphism to Bianchi groups, specifically, congruence subgroups $\Gamma_0(\n)$ of $\mathrm{SL}_2(\mathbb C)$. We construct $\Phi_N$ more generally as the specialization of a 1-cocycle following work of Sczech \cite{Scz3} and Ito \cite{Ito} on elliptic Dedekind sums. 

In Section \ref{def}, we define the cocycle $\Phi_N$ on $\Gamma_0(\n)$ in \eqref{PhiNdef} and derive its basic properties, including the eigenvalues of certain Hecke operators acting on $\Phi_N$ in Proposition \ref{heck}. In Section \ref{Lfunsect}, we show as an application that it parametrizes a family of $L$-values in Theorem \ref{Lvalthm} and obtain as a result the integrality of these special values in Corollary \ref{Lint}. 


Our work is in part motivated by renewed interest in Eisenstein cocycles, Dedekind sums, and their applications to arithmetic. 
Besides the work of \cite{DPV}, Bergeron, Charollois, and Garcia recently constructed an $(n-1)$-homogeneous cocycle on $\Gamma_0(\mathfrak p)$, where $\mathfrak p$ is a prime ideal in $\mathcal O_K$ \cite{BCG}. Their construction can be viewed as a smoothed version of the Eisenstein cocycle of \cite{FKW}. In the case $n=2$, it is expected that $\Phi$ is cohomologous to the non-smoothed Eisenstein cocycle $\Psi_0$. In this context, it will be natural to expect that a smoothed form of $\Phi_N$ will be cohomologous to the cocycle of \cite{BCG}. We speculate on this connection further in Remark \ref{Wes} below.
Elsewhere, conjectures of Sharifi, supported by work of Fukaya and Kato, relate cup products in Galois cohomology to elements in the Eisenstein ideal. In \cite{FKS}, they pose the question of formulating the conjectures over imaginary quadratic fields. Recent work of Sharifi and Venkatesh \cite{SV} gave a motivic construction of Eisenstein cocycles over $\mathbb Q$, but its extension to imaginary quadratic fields is not immediately apparent. We hope that this work will motivate further study in the Bianchi case.

\section{The construction}
\label{def}
\subsection{Definitions}
Let $L=\omega_1\Z+\omega_2\Z$ be a lattice in $\mathbb{C}$ with $\text{Im}(\omega_1/\omega_2)>0$ and let $\mathcal{O}_L:=\{m\in\C~|~mL\subset L\}$ be the ring of multipliers. We assume that $\mathcal O_L$ is an order in an imaginary quadratic field $K$ such that the associated elliptic curve $\mathbb C/L$ has CM by $K$. When the context is clear, we shall sometimes write $\mathcal O = \mathcal O_L$ for convenience.

 For $k\in \mathbb{Z}_{\geq 0}$, define the Kronecker Eisenstein series as
$$
E_k(x)=\sum_{\substack{w\in L\\w\neq 0}} (w+x)^{-k}|w+x|^{-s}\big|_{s=0}, \qquad x,s\in \mathbb C
$$
where the value at $s=0$ is understood in the sense of the analytic continuation. Elliptic Dedekind sums are then defined as 
$$
D(a,c) = \frac{1}{c}\sum_{r\in L/cL} E_1\left(\frac{ar}{c} \right) E_1\left(\frac{r}{c} \right),
$$
where $a,c\in \mathcal O_L$ with $c\neq 0$. We shall furthermore assume that $\mathcal O_L$ is not isomorphic to $\Z[i]$ or $\Z[e^{2\pi i /3}]$, in which case $D(a,c)$ is trivial.
Sczech \cite{Scz3} defined a generalization of the map $\phi$ as a homomorphism $\Phi = \Phi_L: \mathrm{SL}_2(\mathcal{O}_L)\to \C$ where
$$
\Phi(A)
= \begin{cases}
     E_2(0)I\left(\dfrac{a+d}{c}\right)- {D}(a,c)& c\neq 0 \\
    E_2(0)I\left(\dfrac{b}{d}\right) & c=0 
    \end{cases},
$$
with $I(z):=z-\bar z$ and $A = \begin{pmatrix}a & b \\ c & d\end{pmatrix}\in \mathrm{SL}_2(\mathcal{O}_L)$. By \cite[Satz 4]{Scz3}, its image lies in the ring of integers of the number field \[F := \mathbb Q(g_2,g_3,\sqrt{D}),\] where $D$ is the discriminant of $\mathcal O_L$ and $g_2=g_2(L), g_3 = g_3(L)$ are algebraic integers associated to the elliptic curve defined by $L$.

More generally, let
\[
E(x) = \frac{2\pi i }{D(L)}\sum_{\substack{w\in L\\w\neq 0}}\frac{\overline{w+x}}{w+x}|w+x|^{-2s}\big|_{s=0}
\]
where $D(L) = w_1\bar w_2 - \bar w_1 w_2$ if $L = \mathbb Zw_1+\mathbb Z w_2$ and Im$(w_1/w_1)>0$, satisfying $E(0)=E_2(0)$.
For $p,q\in\C/L$ and $a,c\in\mathcal{O}_L$ we define 
\[
D(a,c;p,q) = \frac{1}{c}\sum_{r\in L/cL} E_1\left(\frac{a(r+p)}{c} +q\right) E_1\left(\frac{r+p}{c} \right),
\]
and 
\begin{align*}
\Phi(A)(p,q) = -&\overline{\left(\frac{a}{c}\right)}E(p) -\overline{\left(\frac{d}{c}\right)}E(p^*)\\ & - \frac{a}{c}E_0(p)E_2(q)-\frac{d}{c}E_0(p^*)E_2(q^*)- D(a,c;p,q)
\end{align*}
for $c\neq 0$, and
\[
\Phi(A)(p,q) = -\overline{\left(\frac{b}{d}\right)}E(p)-\frac{b}{d}E_0(p)E_2(q)
\]
for $c=0$, where
\[
(p^*,q^*)=(p,q)A=(ap+cq,bp+dq).
\]
This map $\Phi(A)(p,q) $ satisfies the cocycle relation 
\be
\label{coc}
\Phi(AB)(p,q) = \Phi(A)(p,q)+\Phi(B)((p,q)A).
\ee
That is, if we let $\mathscr F$ be the space of functions from $(\C/L)^2$ to $\C$, then $\Phi(A)(p,q)$ represents a nontrivial cohomology class in $H^1(\SL_2(\mathcal O_L),\mathscr F)$, where the action of $\SL_2(\mathcal O_L)$ on $\mathscr F$ is given by $(Af)(x) = f(xA)$.

Ito \cite[Theorem 1]{Ito} later constructed a function $H(u;p,q)$ on the product of the upper-half space $\mathbb{H}^3$ and $\C^2$ such that 
\[
\Phi(A)(p,q) = H(Au;p,q)-H(u;(p,q)A)
\]
for all $u\in \mathbb H^3$ and $A\in \mathrm{SL}_2(\mathcal{O}_L)$. In particular, the special value $H(u):=H(u;0,0)$ satisfies
\[
\Phi(A) = H(Au)-H(u), 
\]
analogous to the imaginary part of the logarithm of the Dedekind $\eta$ function.

\subsection{The construction of $\Phi_N$}

Let $\n\subset \mathcal O_L$ be a nontrivial integral ideal generated by $N\in \OO_L$, and assume from now on that $N|L$, i.e., that $N$ divides the conductor of $\mathcal O_L$ as an order in $K$. 
Define the congruence subgroup
\[
\Gamma_0(\n) = \left\{\begin{pmatrix}a & b \\ c & d\end{pmatrix} \in \SL_2(\OO_L): c \in \n\right\},
\] 
and let
\[
D^N(x,y):= D(Nx,y) - D(x,y).
\]
 Define the Dedekind-Rademacher-Bianchi homomorphism for congruence subgroups $\Gamma_0(\n) \subset \mathrm{SL}_2(\C)$ as 
        \begin{align*}
        \Phi_N\begin{pmatrix}a & b \\ c & d\end{pmatrix}
        &:= \Phi\begin{pmatrix}a & bN \\ c/N & d\end{pmatrix}-\Phi\begin{pmatrix}a & b \\ c & d\end{pmatrix}\\
    &= \begin{cases}
     E_2(0)I\left(\frac{(N-1)(a+d)}{c}\right)- D^N(a,c)& c\neq 0 \\
    E_2(0)I\left(\frac{(N-1)b}{d}\right) & c=0.
    \end{cases}
\end{align*}


It is straightforward to extend it to a function of $(p,q)$ for $p,q\in\mathbb{C}/L$. We define more generally  
\[
D^N(x,y;p,q):= D(Nx,y;p,q) - D(x,y;p,q)
\]
and
\be
\label{PhiNdef}
\Phi_N\begin{pmatrix}a & b \\ c & d\end{pmatrix}(p,q)
:= \Phi\begin{pmatrix}a & bN \\ c/N & d\end{pmatrix}(p,q)-\Phi\begin{pmatrix}a & b \\ c & d\end{pmatrix}(p,q)
\ee
for $p,q\in \C/L$, and we denote 
\[
\Phi^{p,q}_N(A) = \Phi_N(A)(p,q),\qquad \Phi_N(A)=\Phi_N^{(0,0)}(A)=\Phi_N(A)(0,0).
\]
More explicitly, for $c\neq 0$ and $p,q\in \mathbb{C}/L$, we have
\begin{align*}
\Phi_N\begin{pmatrix}a & b \\ c & d\end{pmatrix}(p,q)&
=-(N-1)\overline{\left(\frac{a}{c}\right)}E(p)-(N-1)\overline{\left(\frac{d}{c}\right)}E(p^*)\\ &-(N-1)\frac{a}{c}E_0(p)E_2(q)-(N-1)\frac{d}{c}E_0(p^*)E_2(q^*)\\
& -D^N(a,c;p,q),
\end{align*} 
where $$(p^*,q^*)=(p,q)A_N=(ap+cq/N,Nbp+dq),$$
and for $c=0$, 
\[
\Phi_N(A)(p,q) = -(N-1)\overline{\left(\frac{b}{d}\right)}E(p)-(N-1)\frac{b}{d}E_0(p)E_2(q).
\]

\begin{prop}
$\Phi_N(A)(p,q)$ is a $1$-cocycle on $\Gamma_0(\n)$ with values in $\mathscr F$.
\end{prop}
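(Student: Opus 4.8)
The plan is to obtain the cocycle relation for $\Phi_N$ from the one already recorded for $\Phi$ in \eqref{coc}, using that the passage $A\mapsto A_N$ is a conjugation. Setting $\alpha=\begin{pmatrix}N&0\\0&1\end{pmatrix}$, one checks directly that
\[
A_N=\begin{pmatrix}a&bN\\c/N&d\end{pmatrix}=\alpha A\alpha^{-1},
\]
and for $A\in\Gamma_0(\n)$ the congruence $N\mid c$ forces $c/N\in\OO_L$, so $A_N\in\SL_2(\OO_L)$ with $\det A_N=1$. Hence $A\mapsto A_N$ is a group homomorphism $\Gamma_0(\n)\to\SL_2(\OO_L)$, i.e. $(AB)_N=A_NB_N$. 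Accordingly I would equip $\mathscr F$ with the action of $\Gamma_0(\n)$ pulled back along this homomorphism, $(A\cdot f)(p,q)=f((p,q)A_N)$, which is exactly the action implicit in the normalization $(p^*,q^*)=(p,q)A_N$ of \eqref{PhiNdef}. With this action the first term $A\mapsto\Phi(A_N)$ of \eqref{PhiNdef} is, tautologically, the pullback of the cocycle $\Phi$ and hence a $1$-cocycle; the content of the proposition is that subtracting $\Phi(A)$ preserves the cocycle property.

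To see this I would substitute $\Phi_N=\Phi(\,\cdot_N)-\Phi$ and apply \eqref{coc} separately to $\Phi((AB)_N)=\Phi(A_NB_N)$ and to $\Phi(AB)$. Collecting the terms indexed by $A$ produces exactly $\Phi_N(A)(p,q)$, so the cocycle identity $\Phi_N(AB)(p,q)=\Phi_N(A)(p,q)+\Phi_N(B)((p,q)A_N)$ reduces to comparing the two residual expressions
\[
\Phi(B_N)((p,q)A_N)-\Phi(B)((p,q)A)\qquad\text{and}\qquad\Phi(B_N)((p,q)A_N)-\Phi(B)((p,q)A_N),
\]
that is, to controlling the difference between the twisted evaluation $\Phi(B)((p,q)A_N)$ and the untwisted evaluation $\Phi(B)((p,q)A)$. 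This is precisely the point where the two module structures — the standard action attached to $\Phi$ and the $A_N$-twisted action attached to $\Phi(\,\cdot_N)$ — must be reconciled, and conjugation by $\alpha$ is the device that identifies them.

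This reconciliation is the main obstacle. The two arguments differ by $\bigl(-(N-1)(c/N)q,\,(N-1)bp\bigr)$, which is not a lattice vector, so the Kronecker--Eisenstein pieces $E,E_0,E_2$ and the elliptic Dedekind sum $D$ do not agree naively under the shift; the required cancellation has to be extracted from the very transformation laws that make $\Phi$ a cocycle. Concretely I would use the $\OO_L$-homogeneity and quasi-periodicity of the $E_k$ to absorb the Eisenstein contributions, and then reduce what remains to a distribution relation for the Dedekind sums — repackaged through $D^N(a,c;p,q)=D(Na,c;p,q)-D(a,c;p,q)$ — relating $D(a,c/N;\cdot)$ to $D(Na,c;\cdot)$; this Dedekind-sum bookkeeping, which is the same analytic input underlying Sczech's and Ito's proofs of \eqref{coc}, is the genuine computational heart of the argument. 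As a consistency check, specializing to $(p,q)=(0,0)$ collapses both arguments to $(0,0)$, the residual term vanishes identically, and one recovers the classical statement that $\Phi_N(\cdot)(0,0)$ is the Dedekind--Rademacher homomorphism.
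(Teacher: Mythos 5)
Your reduction is exactly the paper's: you note $A_N=\alpha A\alpha^{-1}$ with $\alpha=\begin{psmallmatrix}N&0\\0&1\end{psmallmatrix}$, so that $A\mapsto A_N$ is a homomorphism of $\Gamma_0(\n)$ into $\SL_2(\OO_L)$ (the paper records this as $A_NA'_N=A''_N$), and you then subtract and apply \eqref{coc} to each of the two terms of \eqref{PhiNdef}, reducing everything to the residual comparison of $\Phi(B_N)$ (resp.\ $\Phi(B)$) evaluated at $(p,q)A_N$ versus $(p,q)A$. At that point the paper finishes in one line: it asserts $(p,q)A_N\equiv(p,q)A \pmod{L^2}$, concludes $\Phi(A'_N)((p,q)A_N)=\Phi(A'_N)((p,q)A)$, and obtains the relation in the \emph{standard} action, $\Phi_N(AB)(p,q)=\Phi_N(A)(p,q)+\Phi_N(B)((p,q)A)$; note that your final identity instead carries the twisted argument $(p,q)A_N$, which is not the module structure fixed when $\mathscr F$ was introduced.

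The genuine gap is in your last paragraph. You correctly compute that the two evaluation points differ by $\pm\bigl((N-1)(c/N)q,\,(N-1)bp\bigr)$, and you are right that this is not in $L^2$ for general $(p,q)\in(\C/L)^2$ — so you have put your finger on the one nontrivial step, which the paper dispatches by asserting the congruence without restriction. But you then posit a hidden cancellation to be extracted from quasi-periodicity of the $E_k$ and a distribution relation for the elliptic Dedekind sums, and you never carry this out; worse, no such cancellation exists. Test with $B=\begin{psmallmatrix}1&b'\\0&1\end{psmallmatrix}$, so that $\Phi(B_N)(x_1,x_2)=-\overline{Nb'}\,E(x_1)-Nb'E_0(x_1)E_2(x_2)$: the first coordinates of $(p,q)A_N$ and $(p,q)A$ differ by $(N-1)(c/N)q$, and since $E$ is a nonconstant function on $\C/L$ (unlike the locally constant $E_0$), the residual difference is genuinely nonzero for generic $q$; the same test kills your twisted variant. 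So the identity your ``computational heart'' is meant to prove is false as an identity of functions on all of $(\C/L)^2$, and the proposal cannot be completed along the route you describe. What the paper's congruence actually encodes is a restriction of domain: $(p,q)A_N\equiv(p,q)A\pmod{L^2}$ holds for all $A\in\Gamma_0(\n)$ precisely when $(N-1)p,(N-1)q\in L$ — in particular at $(p,q)=(0,0)$, which is all that Corollary \ref{int} uses, and at the parameters satisfying \eqref{eq:assume1}--\eqref{eq:assume2} in Theorem \ref{Lvalthm}. The correct repair is therefore to restrict $\Phi_N$ to such torsion $(p,q)$ (equivalently, to flag that the cocycle relation cannot hold on all of $\mathscr F$), not to force the unrestricted identity by extra analytic input. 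Your consistency check at $(0,0)$ is fine, but it is the only part of the reconciliation you actually establish.
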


\begin{proof}
In $\Gamma_0(\n)$, let
\[
\begin{pmatrix}a & b \\ c & d\end{pmatrix} \begin{pmatrix}a' & b' \\ c' & d'\end{pmatrix}
= \begin{pmatrix}a'' & b'' \\ c'' & d''\end{pmatrix},
\]
and note that 
\[
\begin{pmatrix}a & bN \\ c/N & d\end{pmatrix} \begin{pmatrix}a' & b'N \\ c'/N & d'\end{pmatrix}=\begin{pmatrix}a'' & b''N \\ c''/N & d''\end{pmatrix}.
\]
Let us denote these equations by $AA' = A''$ and $A_NA'_N=A''_N$ respectively.

Then using the cocycle property \eqref{coc}, we have by direct computation for $p,q\in \C/L$,
\begin{align*}
\Phi_N(A'')(p,q) & =  \Phi(A''_N)(p,q)- \Phi(A'')(p,q) \\
 & = \Phi(A_N)(p,q)+  \Phi(A'_N)((p,q)A_N)  - \Phi(A)(p,q) \\
 & \ \ \ \ \  -\Phi(A')((p,q)A)\\
 & =   \Phi_N(A)(p,q) + \Phi_N(A')((p,q)A),
\end{align*}
where we have used the property that $(p,q)A_N \equiv (p,q)A$ mod $L^2$. 
\end{proof}

\begin{cor}
\label{int}
The map $\Phi_N$ is a homomorphism with values in $\mathcal O_F$ for $F = \mathbb Q(g_2,g_3,\sqrt{D})$.
\end{cor}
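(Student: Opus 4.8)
The plan is to deduce both claims from the corresponding statements for Sczech's homomorphism $\Phi$ on $\SL_2(\mathcal O_L)$, which are already in hand. The key observation is that the assignment $A \mapsto A_N$ appearing in the definition \eqref{PhiNdef} is itself a group homomorphism $\Gamma_0(\n) \to \SL_2(\mathcal O_L)$. Indeed, writing $\sigma = \begin{psmallmatrix} N & 0 \\ 0 & 1 \end{psmallmatrix}$, one has $A_N = \sigma A \sigma^{-1}$, so $A \mapsto A_N$ is conjugation by a fixed element and is therefore multiplicative; this is precisely the identity $A_N A_N' = A_N''$ used in the proof of the Proposition. The only thing to verify is that it takes values in $\SL_2(\mathcal O_L)$: for $A = \begin{psmallmatrix} a & b \\ c & d\end{psmallmatrix} \in \Gamma_0(\n)$ we have $c \in \n = (N)$, hence $c/N \in \mathcal O_L$, while $bN \in \mathcal O_L$ and $\det A_N = ad - bc = 1$.

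Next I would note that the scalar specialization $\Phi(A) = \Phi(A)(0,0)$ is a genuine homomorphism. This is immediate from the cocycle relation \eqref{coc}: evaluating at $(p,q) = (0,0)$ and using $(0,0)A = (0,0)$ for every $A$, the relation degenerates to $\Phi(AB) = \Phi(A) + \Phi(B)$. Since $\Phi_N(A) = \Phi(A_N) - \Phi(A)$ by definition, the map $\Phi_N$ is the difference of the two homomorphisms $A \mapsto \Phi(A_N)$ and $A \mapsto \Phi(A)$ into the abelian group $(\C, +)$, and such a difference is again a homomorphism. Equivalently, one may specialize the cocycle relation of the Proposition at $(0,0)$, a point fixed by the $\Gamma_0(\n)$-action, to reach the same conclusion.

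Finally, for integrality I would invoke \cite[Satz 4]{Scz3}, which asserts $\Phi(\SL_2(\mathcal O_L)) \subset \mathcal O_F$ with $F = \mathbb Q(g_2,g_3,\sqrt D)$. By the first step both $A_N$ and $A$ lie in $\SL_2(\mathcal O_L)$, so $\Phi(A_N), \Phi(A) \in \mathcal O_F$; since $\mathcal O_F$ is a ring it is closed under subtraction, giving $\Phi_N(A) = \Phi(A_N) - \Phi(A) \in \mathcal O_F$. There is essentially no obstacle here, as all of the analytic and arithmetic input is packaged in Satz 4; the only genuine verification is the bookkeeping that $A \mapsto A_N$ is a well-defined homomorphism into $\SL_2(\mathcal O_L)$, which in turn rests on the defining condition $c \in \n$ of $\Gamma_0(\n)$.
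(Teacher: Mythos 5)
Your proof is correct and takes essentially the same route as the paper: the paper likewise specializes the cocycle relation for $\Phi_N$ (whose proof rests on the identity $A_N A_N' = A_N''$ and the cocycle property \eqref{coc} of $\Phi$) to the fixed point $(p,q)=(0,0)$, and deduces integrality from Sczech's Satz 4 applied to $\Phi$. Your only addition is the explicit verification that $A \mapsto A_N = \begin{psmallmatrix} a & bN \\ c/N & d \end{psmallmatrix}$ is a homomorphism into $\SL_2(\mathcal O_L)$ --- a detail the paper leaves implicit but which is indeed needed to invoke Satz 4 on $\Phi(A_N)$.
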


\begin{proof}
This follows from specializing the preceding proof to $(p,q)=(0,0)$. Namely, we have
\[
\Phi_N\begin{pmatrix}a'' & b'' \\ c'' & d''\end{pmatrix} = \Phi_N\begin{pmatrix}a& b \\ c& d\end{pmatrix}+\Phi_N\begin{pmatrix}a' & b' \\ c' & d'\end{pmatrix}.
\]
The integrality of $\Phi_N$ follows directly from that of $\Phi$.
\end{proof}

Let $H$ be as defined in \cite[(4)]{Ito} with Fourier expansion
\begin{equation}\label{eq:H}H(u) = E_2(0)(z-\bar z) -\frac{4\pi}{D(L)}\, v \sum'_{m\in L, n\in L'} \frac{\bar m n }{|mn|} K_1(4\pi |mn|v)e(mnz).
\end{equation}
\begin{cor}\label{cor:trans}
Define $H_N(u):= H(Nu)-H(u)$ for $H$ as defined in \eqref{eq:H}. Then $\Phi_N$ satisfies the transformation law 
\[
\Phi_N(A)= H_N(Au)- H_N(u).
\]
\end{cor}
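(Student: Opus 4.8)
The plan is to reduce the statement to Ito's transformation law $\Phi(B) = H(Bu) - H(u)$, which by \cite{Ito} holds for every $B \in \SL_2(\OO_L)$ and every $u \in \mathbb{H}^3$, applied to the two matrices $A$ and $A_N = \begin{pmatrix} a & bN \\ c/N & d\end{pmatrix}$. The first point to record is that $A_N$ again lies in $\SL_2(\OO_L)$: since $A \in \Gamma_0(\n)$ we have $N \mid c$, so $c/N \in \OO_L$, while $bN \in \OO_L$ and $\det A_N = ad - bc = 1$ (this is already implicit in Corollary \ref{int}). Hence Ito's law applies verbatim to $A_N$, and subtracting the two instances gives, for every $u \in \mathbb{H}^3$,
\[
\Phi_N(A) = \Phi(A_N) - \Phi(A) = \big(H(A_N u) - H(u)\big) - \big(H(Au) - H(u)\big) = H(A_N u) - H(Au).
\]

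Next I would identify the scaling $u \mapsto Nu$ with a Möbius action. Writing $h = \begin{pmatrix} 1 & 0 \\ 0 & N\end{pmatrix}$, a direct check shows $A_N = h^{-1} A h$. Passing to $\mathrm{PSL}_2(\C) = \mathrm{Isom}^+(\mathbb{H}^3)$, the class of $h^{-1}$ is represented by $\mathrm{diag}(\sqrt{N}, 1/\sqrt{N}) \in \SL_2(\C)$, whose action on $u = (z,v)$ is $(z,v)\mapsto (Nz, |N|v) =: Nu$; this is exactly the scaling used in $H_N(u) = H(Nu) - H(u)$. In particular $h$ acts as the inverse scaling, so that $h\cdot(Nu) = u$ and $h^{-1}\cdot w = Nw$.

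The computation is then finished by a change of base point. Since $\Phi_N(A) = H(A_N u') - H(Au')$ holds for all $u' \in \mathbb{H}^3$, I specialize it at $u' = Nu$ and use $A_N\cdot(Nu) = h^{-1}A h \cdot(Nu) = h^{-1}(Au) = N(Au)$ to obtain $\Phi_N(A) = H(N(Au)) - H(A\cdot(Nu))$. Applying Ito's law once more to $A$ at the base points $u$ and $Nu$, i.e. $H(Au) = H(u) + \Phi(A)$ and $H(A\cdot(Nu)) = H(Nu) + \Phi(A)$, and expanding the target
\[
H_N(Au) - H_N(u) = H(N(Au)) - H(Au) - H(Nu) + H(u),
\]
the two copies of $\Phi(A)$ cancel and both the expression obtained for $\Phi_N(A)$ and the target reduce to $H(N(Au)) - H(Nu) - \Phi(A)$, which proves the claim.

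I expect the only genuine subtlety to be the correct interpretation of the symbol $Nu$: because $N$ is a nonreal element of $\OO_L$, naive quaternionic multiplication $u \mapsto Nu$ does not preserve $\mathbb{H}^3$, and one must normalize the determinant (work in $\mathrm{PSL}_2(\C)$, i.e. scale $h$ by $\sqrt{N}$) so that the height transforms by the real factor $|N|$. Once this identification is fixed, the conjugation relation $A_N = h^{-1}Ah$ together with the fact that both $A$ and $A_N$ are honest elements of $\SL_2(\OO_L)$ makes the remainder a formal consequence of \cite{Ito}; no new analytic input, in particular no manipulation of the Fourier expansion \eqref{eq:H}, is needed.
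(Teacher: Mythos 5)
Your proposal is correct and follows essentially the same route as the paper: both apply Ito's law to $A$ and to $A_N\in\SL_2(\mathcal O_L)$ with base point $Nu$ (the paper's ``$\Phi$ is independent of the choice of $u$'' is exactly your specialization at $u'=Nu$), and both reduce to the key identity $A_N\cdot(Nu)=N(Au)$, which the paper checks by the direct M\"obius computation $A_N Nu=\frac{aNu+Nb}{(c/N)Nu+d}$ while you obtain it from the conjugation $A_N=h^{-1}Ah$. Your detour through cancelling two copies of $\Phi(A)$ is harmless, and your normalization of $\mathrm{diag}(1,N)$ in $\mathrm{PSL}_2(\C)$ so that $Nu$ means $(Nz,|N|v)$ correctly makes explicit a point the paper leaves implicit.
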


\begin{proof}
Since $\Phi$ is independent of choice of $u\in\mathbb{H}^3$,
we can write $\Phi(A_N)= H(A_NNu) - H(Nu)$. Letting $A_N=\begin{pmatrix}a & bN \\ c/N & d\end{pmatrix}$, so that 
\[
Au=\frac{a u + b}{cu+d},\qquad A_NNu= \frac{aNu + Nb}{\frac{c}{N}Nu+d}, 
\] it follows that 
\begin{align*}
\Phi_N(A)&= \Phi(A_N) - \Phi(A)  = H(A_NNu) - H(Nu)- H(Au)+H(u)\\
&= H(NAu) - H(Au)-(H(Nu)-H(u))=H_N(Au) - H_N(u)
\end{align*}
and the result follows.
\end{proof}

\noindent We then have the following uniqueness theorem analogous to \cite[Theorem 4]{Ito}.
To state the theorem fully, fix a sufficiently large real number $V$ and define for every parallelogram $P$ in $\mathbb{C}$ the set \[
\Pi (P)= \{ z+jv\in \mathbb{H}~|~ z\in P, v>V\}
\]
and fix the usual $\SL_2(\mathbb C)$-invariant measure
$
v^{-3} {dx\ dy\ dv}
$
on $\mathbb H^3$. The discrete group $\Gamma_0(\n)$ acts discontinuously on $\mathbb H^3$ and the volume of any fundamental domain of the action with respect to this measure is finite.

\begin{prop}Assume $\mathcal{O}_L\neq\mathbb{Z}, \mathbb{Z}[i], \mathbb{Z}[e^{2\pi i /3}]$. Then the function $H_N(u)$ is not identically zero. Moreover, if a complex-valued $C^2$-function $h_N$ on $\mathbb{H}$ satisfies the following conditions, then it coincides with $H_N(u)$: 
\begin{enumerate}
\item For any matrix $M\in \Gamma_0(\n)$ and any parallelogram $P$, $h_N$ is bounded on $M(\Pi(P)),$
\item $\Delta h_N=0$ with $\Delta$ defined as $\Delta= v^2\left(\partial_x^2+\partial_y^2+\partial_v^2\right)-v\partial_v$,
\item $h_N(Au)=h_N(u)+\Phi_N(A)$ for all $A\in \Gamma_0(\n)$,
\item $h_N(-z+jv)=-h_N(z+jv)$ for $u=z+jv\in\mathbb{H}^3$.
\end{enumerate}
\end{prop}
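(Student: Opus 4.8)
The two assertions—that $H_N\not\equiv 0$ and that the four conditions characterize $H_N$—I would treat separately, the second being a maximum-principle argument applied to the difference $h_N-H_N$.

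For the non-vanishing I would extract the asymptotic behavior of $H_N$ as $v\to\infty$ directly from the Fourier expansion \eqref{eq:H}. Because each Bessel factor $K_1(4\pi|mn|v)$ decays exponentially, the whole sum vanishes in the limit and only the polynomial term survives, giving $H_N(u)=H(Nu)-H(u)\sim E_2(0)\bigl[(Nz-\overline{Nz})-(z-\bar z)\bigr]=2i\,E_2(0)\,\mathrm{Im}((N-1)z)$. The excluded lattices $\Z[i]$ and $\Z[e^{2\pi i/3}]$ are precisely those whose extra units $i$ and $\zeta_6$ force $E_2(0)=0$ by a symmetry argument on the defining sum (for instance $w\mapsto iw$ sends $E_2(0)$ to $-E_2(0)$); away from these, and from the degenerate case $\mathcal O_L=\Z$, we have $E_2(0)\neq 0$, so the leading term does not vanish identically and $H_N\not\equiv 0$.

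For the uniqueness I set $g:=h_N-H_N$. First I would check that $H_N$ itself satisfies (1)--(4): condition (3) is Corollary \ref{cor:trans}; (2) holds because $H$ is harmonic and $u\mapsto Nu$ is a hyperbolic isometry (it scales $z$ by $N$ and $v$ by $|N|$, preserving the metric) that commutes with $\Delta$; and (1) and (4) follow from \eqref{eq:H}, boundedness on $\Pi(P)$ from the exponential decay of $K_1$ and oddness from the leading term $E_2(0)(z-\bar z)$ together with the substitution $m\mapsto -m$ in the Fourier sum. Subtracting, $g$ is a $C^2$ function with $\Delta g=0$ that is $\Gamma_0(\n)$-invariant (the $\Phi_N(A)$ terms cancel in (3)), bounded on every cuspidal region $M(\Pi(P))$, and odd. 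It therefore descends to a bounded harmonic function on the finite-volume quotient $\Gamma_0(\n)\backslash\mathbb H^3$.

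The crux is to turn boundedness near the cusps into $g\equiv 0$. At each cusp I would expand $g$ along the flat torus cross-section: the zero Fourier mode solves the ordinary differential equation coming from $\Delta$, whose independent solutions behave like $1$ and $v^2$, while the nonzero modes are $K$-Bessel and decay. Boundedness kills the $v^2$-solution and forces $g$ to approach a constant at each cusp, so $g$ extends continuously to the compact space obtained by adjoining one point per cusp. The strong maximum principle for $\Delta$ then gives that $g$ is constant: an interior extremum forces this at once, and the continuous cusp limits prevent a larger value from escaping to a cusp. Finally the oddness condition (4) forces the constant to equal its own negative, so $g\equiv 0$ and $h_N=H_N$. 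I expect the cusp analysis—verifying that the cusps are removable for bounded harmonic functions and hence that the maximum principle applies globally—to be the principal obstacle, exactly as in the proof of \cite[Theorem 4]{Ito}, on which this argument is modeled.
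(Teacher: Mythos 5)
Your proposal is correct in substance but its key analytic step is genuinely different from the paper's. For uniqueness, the paper never compactifies or invokes a maximum principle: it takes a fundamental domain $\mathcal F_\n$ for $\Gamma_0(\n)\backslash\mathbb H^3$ consisting of a compact piece together with finitely many cusp regions $M_i(\Pi(P_i))$, notes that $f_N=h_N-H_N$ is bounded (hence $|f_N|^2$ integrable) there by condition (1), and cites Roelcke \cite[Satz 5]{roel} to justify Green's formula with no boundary contribution, so that $0=-\int_{\mathcal F_\n}\bar f_N\Delta f_N\,v^{-3}\,dx\,dy\,dv=\int_{\mathcal F_\n}\bigl(|\partial_x f_N|^2+|\partial_y f_N|^2+|\partial_v f_N|^2\bigr)v^{-3}\,dx\,dy\,dv$, forcing $f_N$ constant and then $f_N=0$ by (4). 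This Dirichlet-energy argument buys you precisely what you flag as your principal obstacle: boundedness plus finite volume suffice, and the cusp-removability question never arises. Your route also works, and your zero-mode computation is in fact strong enough to close the gap you flag yourself: since boundedness kills the $v^2$ solution, the constant term of $g$ at a cusp is \emph{identically} equal to the limit value $c$ for all $v>V$, so the cross-sectional average of $g$ equals $c$; a supremum attained only at a cusp would then force $g\equiv c$ on cross-sections, whence $g\equiv c$ by the strong maximum principle (or real analyticity). Finally, note that the paper's proof is silent on the nonvanishing of $H_N$, which your asymptotic argument supplies: the leading term $2iE_2(0)\,\mathrm{Im}((N-1)z)$ is nonzero since $\n$ is nontrivial (so $N\neq 1$) and $E_2(0)\neq 0$ outside the excluded rings, the latter being the nontrivial input from Sczech used in \cite[Theorem 4]{Ito}, which you assert rather than prove.
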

\begin{proof}
The function $H_N$ satisfies the first and third conditions by \eqref{eq:H} and Corollary \ref{cor:trans} respectively. The other two conditions follow from the fact that $H(u)$ is harmonic and satisfies $H(-z+jv) = -H(z+jv)$. On the other hand, given $h_N$ satisfying (1), (2), and (3), let $f_N = h_N - H_N$. It is known that there exists a fundamental domain $\mathcal F$ for $\SL_2(\mathcal O_L)\backslash \mathbb H^3$ of the form
\[
\mathcal F = \mathcal F_0\cup \left(\bigcup_{i=1}^n M_i(\Pi(P_i))\right), \qquad M_i \in \SL_2(\mathbb C),
\] 
where $\mathcal F_0$ is a compact set and $P_i$ are parallelograms in $\mathbb C$ for $i=1,\dots,n$. Since $\Gamma_0(\n)$ has finite index in $\SL_2(\mathcal O_L)$ we can also obtain a similar fundamental domain $\mathcal F_\n$ for $\Gamma_0(\n)\backslash \mathbb H^3$. Then $|f_N|^2$ is bounded and integrable on $\mathcal F_\n$ and \cite[Satz 5]{roel} show that
\[
0 = - \int_{\mathcal F_\n}\bar f_N\Delta f_N\frac{dx\ dy\ dv}{v^3} = \int_{\mathcal F_\n} |\partial_x f_N|^2 + |\partial_y f_N|^2 + |\partial_v f_N|^2 \frac{dx\ dy\ dv}{v^3}.
\]
Hence $\partial_xf_N=\partial_yf_N=\partial_vf_N =0$ on $\mathcal F_\n$, and $f_N$ must be constant on $\mathbb H^3$. Since $h_N$ satisfies (4), then $f_N=0$.
\end{proof}

\begin{rem} In the classical case, if we define the quotient of eta functions $h_N(z)= \eta(Nz)/\eta(z)$, then the Dedekind-Rademacher homomorphism $\phi_N$ can be interpreted as the period of $\log h_N$ by the rule
\[
\log h_N(\gamma z) - \log h_N(z) = \frac{\pi i }{12}\phi_N(\gamma), \qquad \gamma=\begin{pmatrix}a & b \\ c & d\end{pmatrix}
\]
for any $z\in\mathbb{H}^2$ \cite[(2.2)]{Maz}. Then just as $H(u)$ is the analogue of the $\log\eta(z)$, so $H_N(u)$ can be viewed as the analogue of $\log h_N(z)$.
\end{rem}

\subsection{Hecke action} 

Let us first recall the Hecke operators in our setting. Let $\Gamma$ be a finite index subgroup of the Bianchi group $\mathrm{PSL}_2(\mathcal{O}_K)$. 
Given $g\in \mathrm{PSL}_2(K)$, consider the 3-folds $M,M_g, M^g$ associated to 
$$
\Gamma, \ \ \ \Gamma_g=\Gamma\cap g\Gamma g^{-1}, \ \ \  \Gamma^g =\Gamma\cap g^{-1}\Gamma g
$$
respectively. We have finite coverings induced by inclusion of fundamental groups
$
r_g: M_g\to M$ and $r^g:M^g\to M
$,
and so an isometry  $\tau: M_g\to M^g$ induced by the isomorphism between $\Gamma_g$ and $\Gamma^g$ given by conjugation by $g$. The composition $s_g:=r^g\circ \tau$ gives us a second finite covering from $M_g$ to $M$. The coverings $r_g$ induce linear maps between homology groups
$$r_g^*: H^i(M,\mathcal{V})\to  H^i(M_g,\mathcal{V}).$$
The process of summing finitely many preimages in $M_g$ of a point of $M$ under $s_g$ leads to 
$$
s_g^*: H^i(M_g,\mathcal{V})\to  H^i(M,\mathcal{V}).
$$
Note that $s_g^*$ is equivalent to the composition
$
H^i(M_g,\mathcal{V})\to   H^i(M^g,\mathcal{V})\to  H^i(M,\mathcal{V}),
$
where the second arrow  is the corestriction map. 

The Hecke operator $T_g$ associated to $g\in \mathrm{PSL}_2(K)$ is then defined as the composition 
$$
T_g:= s_g^*\circ r_g^*: H^i(M,\mathcal{V})\to H^i(M,\mathcal{V}).
$$
Moverover, if we take $g\in \mathrm{GL}_2(K)$ and let 
\[
g=\begin{pmatrix} 1& 0 \\ 0 & p\end{pmatrix},
\]
where $p$ is a prime in $\mathcal{O}$ and $p\mathcal{O}$ is a prime ideal in $\mathcal{O}$.  Denote $T_p$ by the associated Hecke operator. We recall the following properties of the homomorphism $\Phi$ proved in \cite[Satz 7]{Scz3}.

\begin{lem} For each homomorphism $\Phi=\Phi_L$ where $\mathcal{O}L\subset L$ we have 
\[
\Phi\circ T_p=(p+\bar p)\Phi.
\]
If $x=\begin{pmatrix} 1& 0 \\ 0 & -1\end{pmatrix}$,  this gives $\Phi\circ T_x = -\Phi.$ 
\end{lem}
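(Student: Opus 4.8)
The plan is to reduce the Hecke action to an explicit finite sum over coset representatives and then to evaluate that sum using the homogeneity and distribution relations of the Kronecker-Eisenstein series $E_k$. Since $\Phi$ is a homomorphism into $\mathbb{C}$ with the trivial $\Gamma$-action, it defines a class in $H^1(\mathrm{SL}_2(\mathcal{O}_L),\mathbb{C})$, and for such a class the correspondence-theoretic definition $T_p=s_g^*\circ r_g^*$ unwinds into the standard double-coset formula. First I would decompose $\Gamma g\Gamma=\bigsqcup_i\Gamma\alpha_i$ for $g=\begin{psmallmatrix}1&0\\0&p\end{psmallmatrix}$; over the principal ideal domain $\mathcal{O}_L$ one may take the representatives to be $\begin{psmallmatrix}p&0\\0&1\end{psmallmatrix}$ together with $\begin{psmallmatrix}1&j\\0&p\end{psmallmatrix}$, $j\in\mathcal{O}_L/p\mathcal{O}_L$, so that there are $N(p)+1$ of them. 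For $A\in\Gamma$ the Hecke value then takes the form $(\Phi\circ T_p)(A)=\sum_i\Phi(\alpha_iA\alpha_{\sigma(i)}^{-1})$, where $\sigma$ is the permutation of cosets induced by right multiplication by $A$; each $\alpha_iA\alpha_{\sigma(i)}^{-1}$ lies in $\Gamma$, since it has determinant $1$ and, by the defining property of $\sigma$, integral entries.

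The next step is to expand each summand using $\Phi(B)=E_2(0)\,I\!\left(\frac{a'+d'}{c'}\right)-D(a',c')$ for $B=\begin{psmallmatrix}a'&b'\\c'&d'\end{psmallmatrix}$. A direct computation shows that the entries of $\alpha_iA\alpha_{\sigma(i)}^{-1}$ are those of $A$ dilated by $p$ or $1/p$ and translated by the $j$; for instance $\begin{psmallmatrix}p&0\\0&1\end{psmallmatrix}A\begin{psmallmatrix}1&-k/p\\0&1/p\end{psmallmatrix}$ has lower-left entry $c$ and carries $a$ to $pa$, so that its Dedekind-sum contribution is $D(pa,c)$. The lattice $L$ is held fixed throughout; only the arguments of the Eisenstein series move. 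I would therefore separate $(\Phi\circ T_p)(A)$ into a sum of linear terms $E_2(0)\,I(\cdots)$ and a sum of Dedekind sums $D(\cdots)$, and prove that each equals $(p+\bar p)$ times the corresponding piece of $\Phi(A)$.

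Both identities rest on the homogeneity $E_k(x,\lambda L)=\lambda^{-k}E_k(x/\lambda,L)$ and on the distribution relation $\sum_{u\in\frac1pL/L}E_1(x+u,L)=E_1(x,\tfrac1pL)=p\,E_1(px,L)$, together with its $E_2$-analogue; the sum over the $j$-cosets realizes the averaging over the $p$-torsion $\tfrac1pL/L$, while the coset $\begin{psmallmatrix}p&0\\0&1\end{psmallmatrix}$ supplies the dilated term. I expect the main obstacle to be the bookkeeping that makes the holomorphic and antiholomorphic contributions recombine into the trace $p+\bar p$, rather than into the naive coset count $N(p)+1$. The delicate point is that each $E_k$ is defined only by analytic continuation from the region where $\mathrm{Re}(s)$ is large, so every reindexing of the conditionally convergent lattice sums must first be justified for $\mathrm{Re}(s)\gg0$ and then continued to $s=0$; this is where Sczech's regularization does the real work and where the exact eigenvalue is pinned down.

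Finally, for $x=\begin{psmallmatrix}1&0\\0&-1\end{psmallmatrix}$ the determinant is a unit, so $\Gamma_x=\Gamma^x=\Gamma$ and $T_x$ is simply pullback by the automorphism $A\mapsto xAx^{-1}=\begin{psmallmatrix}a&-b\\-c&d\end{psmallmatrix}$; here no distribution relation is needed and $(\Phi\circ T_x)(A)=\Phi(xAx^{-1})$. I would conclude by the direct computation $I\!\left(\frac{a+d}{-c}\right)=-I\!\left(\frac{a+d}{c}\right)$ together with the oddness $E_1(-z)=-E_1(z)$, which gives $D(a,-c)=\frac1{-c}\sum_{r\in L/cL}E_1(-ar/c)\,E_1(-r/c)=-D(a,c)$, whence $\Phi(xAx^{-1})=-E_2(0)\,I\!\left(\frac{a+d}{c}\right)+D(a,c)=-\Phi(A)$; the case $c=0$ is identical, using $I(-b/d)=-I(b/d)$.
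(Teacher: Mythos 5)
You should first note that the paper contains no proof of this lemma at all: it is explicitly \emph{recalled} from \cite[Satz 7]{Scz3}, so your attempt is really being measured against Sczech's original argument rather than anything in the text --- and measured that way, your route is essentially his. For a homomorphism into the trivial module $\mathbb{C}$, the topological operator $T_g=s_g^*\circ r_g^*$ does unwind to $(\Phi\circ T_p)(A)=\sum_i\Phi\bigl(\alpha_iA\alpha_{\sigma(i)}^{-1}\bigr)$ over the $N(p)+1$ representatives you list, and the eigenvalue is then extracted from homogeneity and distribution relations of $E_1,E_2$, with all reindexings justified for $\mathrm{Re}(s)\gg0$ and continued to $s=0$, exactly as you say. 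Two caveats. First, $\mathcal{O}_L$ need not be a PID; what your coset decomposition actually uses is the paper's standing assumption that $p$ is a \emph{principal} prime with $p\mathcal{O}$ prime, and the hypothesis $\mathcal{O}L\subset L$ of the lemma enters precisely where you gesture at ``averaging over the $p$-torsion'': it makes $L$ an $\mathcal{O}$-module, so $L/pL$ is one-dimensional over the field $\mathcal{O}/p\mathcal{O}$, and hence $\{jr/p : j\in\mathcal{O}/p\}$ really does sweep out all of $\tfrac{1}{p}L/L$ when $r\notin pL$, while the stratum $r\in pL$ must be handled separately and contributes $N(p)$-weighted terms; you should say this explicitly, since it is where the lemma's hypothesis is consumed. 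Second, the step you flag but do not carry out --- showing that, with $j_0$ the residue for which the conjugate degenerates to lower-left entry $c$ (when $p\nmid c$), the combination $D(pa,c)+D\bigl((a+j_0c)/p,\,c\bigr)+\sum_{j\neq j_0}D(a+jc,\,pc)$ together with the $E_2(0)\,I(\cdot)$ terms (where $I(pz)=pz-\bar p\bar z$, not $p\,I(z)$) collapses to exactly $(p+\bar p)\Phi(A)$ rather than, say, $(N(p)+1)\Phi(A)$ --- is the entire content of Satz 7, so as written your text is a correct plan whose decisive computation remains open. Your treatment of $T_x$, by contrast, is complete and correct: since $\det x$ is a unit, $\Gamma_x=\Gamma^x=\Gamma$ and $T_x$ is conjugation, and oddness of $E_1$ gives $D(a,-c)=-D(a,c)$, whence $\Phi(xAx^{-1})=-\Phi(A)$ in both cases $c\neq 0$ and $c=0$.
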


It is then straightforward to show the same for the new homomorphism $\Phi_N$.

\begin{prop}\label{heck}
Assume $\mathcal{O}L\subset L$. For each $N|L$  we have 
$$
\Phi_N\circ T_p =(p+\bar p)\Phi_N.
$$
For $x=\begin{pmatrix} 1& 0 \\ 0 & -1\end{pmatrix}$,  then $\Phi_N\circ T_x = -\Phi_N.$
\end{prop}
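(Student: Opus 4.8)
The plan is to deduce the statement from the preceding lemma \cite[Satz 7]{Scz3} together with the defining relation
\[
\Phi_N(A) = \Phi(A_N) - \Phi(A), \qquad A_N = \begin{pmatrix} a & bN \\ c/N & d\end{pmatrix} = W_N^{-1} A W_N,
\]
where $W_N = \begin{pmatrix} 1 & 0 \\ 0 & N\end{pmatrix}$. Writing $c_{W_N}$ for the conjugation $A\mapsto W_N^{-1}AW_N$, this reads $\Phi_N = \Phi\circ c_{W_N} - \Phi$ as functions on $\Gamma_0(\n)$. Since the Hecke operator $T_p$ acts linearly, it suffices to show that each of $\Phi\circ c_{W_N}$ and $\Phi$ is a $T_p$-eigenfunction with the common eigenvalue $p+\bar p$; subtracting the two eigenvalue equations then yields $\Phi_N\circ T_p = (p+\bar p)\Phi_N$, and the identical argument with $x$ in place of the matrix defining $T_p$ gives $\Phi_N\circ T_x = -\Phi_N$.

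For the untwisted term, $\Phi\circ T_p = (p+\bar p)\Phi$ is exactly the preceding lemma, so I only need that this eigenvalue relation persists after restricting from $\SL_2(\mathcal{O}_L)$ to $\Gamma_0(\n)$; this is the standard compatibility of $T_p$ with restriction, valid for $p$ coprime to $N$. For the twisted term, the key observation is that the matrix $g = \begin{pmatrix}1 & 0\\ 0 & p\end{pmatrix}$ defining $T_p$ and the matrix $W_N$ are both diagonal, hence commute, so conjugation by $W_N$ carries the Hecke correspondence on $\Gamma_0(\n)$ into itself. Concretely, with the usual coset representatives $\begin{pmatrix}1 & j\\0 & p\end{pmatrix}$ ($j$ running over residues modulo $p$) and $\begin{pmatrix}p & 0\\0 & 1\end{pmatrix}$, conjugation by $W_N$ sends them to $\begin{pmatrix}1 & jN\\0 & p\end{pmatrix}$ and $\begin{pmatrix}p & 0\\0 & 1\end{pmatrix}$, which for $p$ coprime to $N$ is the same collection up to left multiplication by $\Gamma_0(\n)$. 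Hence applying $T_p$ commutes with precomposition by $c_{W_N}$, and therefore
\[
(\Phi\circ c_{W_N})\circ T_p = (\Phi\circ T_p)\circ c_{W_N} = (p+\bar p)\,(\Phi\circ c_{W_N}).
\]

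Combining the two terms gives $\Phi_N\circ T_p = (p+\bar p)\Phi_N$, and running the same argument with $x=\begin{pmatrix}1&0\\0&-1\end{pmatrix}$, whose defining data is again diagonal and therefore commutes with $W_N$, together with $\Phi\circ T_x = -\Phi$, yields $\Phi_N\circ T_x = -\Phi_N$.

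I expect the main obstacle to be making the commutation of $T_p$ with $c_{W_N}$ precise at the level of the cohomological definition $T_p = s_g^*\circ r_g^*$: one must verify that the coverings $r_g$, $s_g$ and the isometry $\tau$ are genuinely intertwined by the Atkin--Lehner-type conjugation $c_{W_N}$, which is the intrinsic content of the coset-representative bookkeeping above. Some care is also needed when $p\mid N$, where the representatives $\begin{pmatrix}1 & jN\\0 & p\end{pmatrix}$ collapse and the permutation argument fails; restricting to $p$ coprime to $N$ both repairs this and is the case relevant to the $L$-value applications in Section \ref{Lfunsect}.
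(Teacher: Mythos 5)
Your proposal is correct and takes essentially the paper's approach: write $\Phi_N = \Phi\circ c_{W_N} - \Phi$ and, by linearity of $T_p$, apply the eigenvalue lemma to each term separately. The paper's proof simply asserts that the twisted term $A\mapsto \Phi(A_N)$ satisfies $\Phi(A_N)\big|T_p = (p+\bar p)\Phi(A_N)$; your coset-representative check that conjugation by the diagonal matrix $W_N$ preserves the Hecke correspondence (together with the $p\nmid N$ caveat) makes explicit the justification the paper leaves implicit.
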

\begin{proof} 
Applying the preceding lemma, the definition of $\Phi_N$ gives 
\begin{align*}\Phi_N\begin{pmatrix}a & b \\ c & d\end{pmatrix}\Big|T_p
        &= \Phi\begin{pmatrix}a & bN \\ c/N & d\end{pmatrix}\Big|T_p-\Phi\begin{pmatrix}a & b \\ c & d\end{pmatrix}\Big|T_p\\
        &= (p+\bar p)\Phi\begin{pmatrix}a & bN \\ c/N & d\end{pmatrix}-(p+\bar p)\Phi\begin{pmatrix}a & b \\ c & d\end{pmatrix}\\
        & = (p+\bar p)\Phi_N\begin{pmatrix}a & b \\ c & d\end{pmatrix}.
        \end{align*}
The second assertion follows similarly.
\end{proof}

\begin{rem}
As in the classical case, it would be desirable to show that $\Phi_N^{p,q}$ is Eisenstein in the sense that it lies in the Eisenstein cohomology of $\SL_2(\mathcal O_L)$. Unfortunately, we have not yet been able to verify this. 
\end{rem}

\section{Special values of $L$-functions}
\label{Lfunsect}

Define $u=z+jv$ for $z\in\mathbb{C}$ and $v\in\mathbb{R}_{>0}$ and note that $\SL_2(\mathbb{C})$ acts on $\mathbb{H}^3$ by $Mu=(au+b)(cu+d)^{-1}$. If an element of $\SL_2(\mathcal{O})$ has a fixed point in $K$, it must be conjugate in $\Gamma_0(\mathfrak{n})$ to an element 
$\begin{pmatrix}\zeta & a \\ 0 &\zeta^{-1}\end{pmatrix}$ for $a\in K$ and $\zeta$ a root of unity in $K$. Thus for 
\begin{equation}\label{eq:assume}A_N=\begin{pmatrix}a& Nb \\ c/N &d\end{pmatrix}\in \Gamma_0(\mathfrak{n})\text{ with }(a+d)^2\neq 0, 1, 4,\end{equation} 
$A_N$
 has two distinct fixed points which we will denote $N\alpha$ and $N\alpha'$ in $\mathbb{C}$ as they are multiples of the fixed points $\alpha$ and $\alpha'$ for $\begin{pmatrix}a& b \\ c &d\end{pmatrix}\in \SL_2(\mathcal{O})$. The element $\varepsilon:=c\alpha +d$ is a unit in the field $K(\alpha)$ and in $K(N\alpha)$. We have $|\varepsilon|\neq 1$ and $\varepsilon\cdot\varepsilon' =1$ for $\varepsilon'=c\alpha'+d$. Now define 
\[Q_N(m,n) := (mN\alpha+n)(mN\alpha'+n).\] 
From \cite[p.\,159]{Ito}, we have 
\[Q_N((m,n)A_N) = Q_N(m,n) .\] 

For $p,q\in\mathbb{C}$, assume 
\begin{align}\label{eq:assume1}(p,q)A&\equiv (p,q)\, \text{mod}\, L^2 \ \ \text{and}\\
\label{eq:assume2}(p,q)A_N&\equiv (p,q)\, \text{mod}\, L^2.\end{align}
The matrix $A_N$ acts naturally on $(L+p)\times (L+q)$ and we can define the series
\begin{equation}
L_{N}(A_N,s;p,q) = \sum_{(m,n)}\frac{\overline{Q_N(m,n)}}{|Q_N(m,n)|^{2s}}
\end{equation}
for $\text{Re}(s)>3/2$ where the sum over $(m,n)$ runs through the complete set of elements of $(L+p)\times (L+q) -\{(0,0)\}$ not associated with respect to the action of $A_N$.

The following result and proof follows similarly to that of  \cite[Theorem 3]{Ito}.

{\thm 
\label{Lvalthm}
Let $\theta = 1$ if $|\varepsilon|>1$ and $\theta=-1$ otherwise. Given the notation and assumptions \eqref{eq:assume} -- \eqref{eq:assume2}, we have 

\begin{align*}
\Phi_N(A)(p,q)&=\theta(\alpha -\alpha')\Big(NL_{N}(A_N,1;p,q)  - L(A,1;p,q) \Big)\\
&= -(N-1)\overline{\left(\frac{a+d}{c}\right)}E_2(p) - (N-1)\frac{a+d}{c}E_0(p)E_2(q) 
\\ & \ \ \ \ \ \ - D^N(a,c;p,q) \nonumber
\end{align*}
and
\begin{align*}
	\theta(\alpha -\alpha')L_{N}(A_N,1;p,q)&=-\overline{\left(\frac{a+d}{c}\right)}E_2(p) - \frac{a+d}{c}E_0(p)E_2(q) \\
	& \ \ \ \ \ \ \   - \frac{1}{N} D(a,c/N;p,q).
\end{align*}
}

We note that if we take $\mathfrak n$ to be a prime ideal, the difference of $L$-functions on the righthand side can be viewed as a smoothing of the $L$-function at $\mathfrak n$.

\begin{proof} Let $L_K=\{km~|~k\in K,m\in L\}$.  Note that every element $\mu_N$ of $L_KN\alpha +L_K$
is uniquely expressed as $\mu_N=mN\alpha+n$ with $m,n\in L_K$. Let $\mu'_N = mN\alpha'+n$ and define $Q_N(\mu_N) =\mu_N\mu_N'$. Assumptions \eqref{eq:assume} -- \eqref{eq:assume2} imply 
\begin{align*}\mathcal{M}_{p,q}&:= (L+p)\alpha+L+q\subset L_K \alpha +L_K,\\
\mathcal{M}^N_{p,q}&:= (L+p)N\alpha+L+q\subset L_K N\alpha +L_K,
\end{align*}
also $\varepsilon\mathcal{M}_{p,q}=\mathcal{M}_{p,q}$, and  $\varepsilon\mathcal{M}^N_{p,q}=\mathcal{M}^N_{p,q}$.  We have that, for $p,q\in\mathbb{C}/L$,
\[L_{N}(A_N,s;p,q) = \sum''_{\mu_N}\frac{\overline{Q_N(\mu_N)}}{|Q_N(\mu_N)|^{2s}},\quad \text{Re}(s)>3/2\]
where the sum $ \displaystyle\sum''_{\mu_N}$ runs over a complete set of elements of $\mathcal{M}^N_{p,q}-\{0\}$ not associated with respect to $\varepsilon$.

Let 
\begin{align*}
u_N(t) &= z_N(t) +jv_N(t)\\ & = \frac{N\alpha t^2 +N\alpha '}{t^2+1} +j\frac{N|\alpha - \alpha'|t}{t^2+1}\\ &  =N z(t) +jN v(t)
=Nu(t)
\end{align*}
 for $u(t), z(t)$ and $v(t)$. 
 Note that $u_N(t)$ is a geodesics connecting $N\alpha$ and $N\alpha'$ and using the analogous identity in \cite[p.\,160]{Ito}, we have 
 \[A_N u_N(t) =u_N(|\varepsilon|^2t). \]

Denote the path from $u_N(1)$ to $u_N(|\varepsilon|^2 t)$ on this geodesic by $Z_{A_N}$. By Corollary \ref{cor:trans} and Theorem 1 of \cite{Ito},
\begin{equation*}
\int_{Z_{A_N}}\omega(p,q; s) - \int_{Z_{A}}\omega (p,q; s)= \Phi(A_N)(p,q)- \Phi(A)(p,q).
\end{equation*}
Both $\int_{Z_{A_N}}\omega(p,q; s)$ and $ \int_{Z_{A}}\omega (p,q; s)$ are holomorphic in $s\in\mathbb{C}$. The second integral $ \int_{Z_{A}}\omega (p,q; s)$ has been explicitly calculated in \cite[Theorem 1]{Ito}. We  provide the computation for the first along similar lines.

 For $\text{Re}(s)>1$, $\int_{Z_{A_N}}\omega(p,q; s)$  can be evaluated as follows.
First note that
 \[
\left(\frac{dz(t)}{dt}, \frac{dv(t)}{dt},\frac{d\bar z(t)}{t}\right) = (1+t^2)^{-2}\left(2(\alpha -\alpha ') t , |\alpha-\alpha'|(1-t^2), 2\overline{(\alpha - \alpha ')} t \right)
\]
\begin{align*}
&	\left(\frac{dz_N(t)}{dt}, \frac{dv_N(t)}{dt},\frac{d_N\bar z(t)}{t}\right) \\  & \ \ \ \ \ \ \ \ \ \ \ \ \ \ \ \ \ \ \ \ \ \ \ \ = N (1+t^2)^{-2}\left(2(\alpha -\alpha ') t , |\alpha-\alpha'|(1-t^2), 2\overline{(\alpha - \alpha ')} t \right).
\end{align*}

We then have
\begin{align}
\int_{Z_{A_N}}&\omega(p,q; s) \nonumber
\\
&= 
N\int_1^{|\varepsilon|^2}[E_z(u_N(t), s; p,q) 2(\alpha -\alpha ') t+  E_v(u_N(t), s; p,q) |\alpha-\alpha'|(1-t^2)\nonumber\\
 & \ \ \ \ \ \ \ \ \ \ \ \ \ \ \ \ \ \ \ \ +E_{\bar z}(u_N(t), s; p,q) 2\overline{(\alpha -\alpha ')} t](1+t^2)^{-2}\, dt \nonumber\\
&= 
 2N\sum'_{\substack{m\in L+p\\ n \in L+q}}\int_1^{|\varepsilon|^2}\Big[(\overline{mNz(t)+n})^2(\alpha -\alpha ') t \nonumber\\& \ \ \ \ \ \ \ \ \ \ \  \  +  (\overline{mNz(t)+n})\bar m Nv(t) |\alpha-\alpha'|(1-t^2) - (\bar m Nv(t))^2(\overline{\alpha -\alpha '}) t\Big]
 \nonumber\\
 & \ \ \ \ \ \ \ \ \ \ \ \ \ \ \ \ \ \ \ \ \ \ \ \ \ \ \times(|mNz(t) + n |^2 + |m Nv(t)|^2)^{-s-2} \frac{N^sv(t)^s}{{(1+t^2)^{2}}}\, dt. \nonumber\\\label{eq:inta_N1}
\end{align}
Let $\mu=m\alpha+n,\mu_N=mN\alpha+n\in \mathcal{M}_{p,q}$. Then 
\begin{align*}
&mz_N(t) +n = \frac{\mu_N t^2+\mu_N'}{t^2+1},\qquad mv_N(t) = \frac{|\alpha - \alpha'|  (\mu_N-\mu_N')t}{(\alpha -\alpha')(t^2+1)}, \text{ and }\\
 |&mz_N(t)+n|^2+|mv_N(t)|^2 = \frac{|\mu_N|^2t^2+|\mu_N'|^2}{t^2+1},
\end{align*}
and it follows that
\begin{align} &(\overline{mz_N(t)+n})^2(\alpha -\alpha ') t+  (\overline{mz_N(t)+n})\bar m v_N(t) |\alpha-\alpha'|(1-t^2) \nonumber
\\ \nonumber
&\ \ \ \ \ \ \ \ \ \ \ \ \ \ \ \ \ \ \ \ \ \ \  \ \ \ \ \ \ \ \ \ \ \ \ \ \ \ \ \ \ \ \ - (\bar m v_N(t))^2(\overline{\alpha -\alpha '}) t 
 = (\alpha -\alpha') \overline{Q_N(\mu_N)}t.\\\end{align}
 Note that similar identities also hold for $\mu$ and $\mu'$ from \cite[p.\,160]{Ito}.
Equation \eqref{eq:inta_N1} then becomes
\begin{align}
&2N\sum'_{\substack{\mu_N\in \mathcal{M}_{p,q}}}\int_1^{|\varepsilon|^2}\Big[ (\alpha -\alpha') \overline{Q_N(\mu_N)}t\Big]\left(\frac{|\mu_N|^2t^2+|\mu_N'|^2}{t^2+1}\right)^{-s-2} \frac{N^sv(t)^s}{{(1+t^2)^{2}}}\, dt\nonumber\\
 &= 
2N^{s+1}(\alpha -\alpha')\sum'_{\substack{\mu_N\in \mathcal{M}_{p,q}}}\int_1^{|\varepsilon|^2}  \overline{Q_N(\mu_N)} \frac{|\alpha - \alpha'|^st^s}{(t^2+1)^s}
\frac{t\,(1+t^2)^{s}}{{\left(|\mu_N|^2t^2+|\mu_N'|^2\right)^{s+2}}}\, dt\nonumber\\
 &= 
2N^{s+1}(\alpha -\alpha')|\alpha - \alpha'|^s\sum'_{\substack{\mu_N\in \mathcal{M}^N_{p,q}}}\int_1^{|\varepsilon|^2}  \overline{Q_N(\mu_N)} 
\left(\frac{t}{|\mu_N|^2t^2+|\mu_N'|^2}\right)^{s+2}\, \frac{dt}{t}. \nonumber\\\label{eq:ina_N2}
\end{align}
By change of variables $t\mapsto \frac{|\mu_N'|}{|\mu_N|}t$, \eqref{eq:ina_N2} becomes
\begin{align}
&2N^{s+1}(\alpha -\alpha')|\alpha - \alpha'|^s\sum'_{\substack{\mu_N\in \mathcal{M}^N_{p,q}}} \frac{\overline{Q_N(\mu_N)}}{|Q_N(\mu_N)|^{s+2}} \int^{\left|\frac{(\mu_N\varepsilon)'}{\mu_N\varepsilon}\right|}_{\left|\frac{\mu_N'}{\mu_N}\right|}
\left(\frac{t}{t^2+1}\right)^{s+2}\, \frac{dt}{t}\nonumber \\
  &\ \ \ = 
2N^{s+1}\theta(\alpha -\alpha')|\alpha - \alpha'|^s\sum''_{\substack{\mu_N\in \mathcal{M}^N_{p,q}}} \frac{\overline{Q_N(\mu_N)}}{|Q_N(\mu_N)|^{s+2}} \int^{\infty}_{0}
\left(\frac{t}{t^2+1}\right)^{s+2}\, \frac{dt}{t}\label{eq:inta_N3} \nonumber\\
\end{align}
where, as above, the sum $ \displaystyle\sum''_{\mu_N}$ runs over a complete set of elements of $\mathcal{M}^N_{p,q}-\{0\}$ not associated with respect to $\varepsilon$.
Thus, from \eqref{eq:inta_N3} and Theorem 1 of \cite{Ito}, we have

\begin{align}\label{eq:L} & \int_{Z_{A_N}}\omega(p,q; s) - \int_{Z_{A}}\omega (p,q; s)= \frac{\theta(\alpha -\alpha')|\alpha - \alpha'|^s}{\Gamma(s+2)}\Gamma\left(\frac{s+2}{2}\right)^2 \nonumber \\  & \hspace{3cm} \times \Big(N^{s+1}L_{N}(A_N,\frac{s+2}{2};p,q)  - L(A,\frac{s+2}{2};p,q) \Big). \nonumber\\ 
\end{align}

Then by analytic continuation to $s=0$, this becomes
\begin{align*}
\theta(\alpha -\alpha')\Big(NL_{N}(A_N,1;p,q)  - L(A,1;p,q) \Big).
\end{align*}
This gives the first identity in the theorem.

On the other hand, 
we have 
\begin{align*} \int_{Z_{A_N}}&\omega(p,q; s) - \int_{Z_{A}}\omega (p,q; s)  = \Phi(A_N)(p,q)- \Phi(A)(p,q) \end{align*} and 
\begin{align*} \int_{Z_{A}}\omega (p,q; s)  = \Phi(A)(p,q) .\end{align*}
From \eqref{eq:assume1} and \eqref{eq:assume2}, we have also that
\begin{align*}
\Phi(A_N)(p,q)&= -\overline{N\left(\frac{a+d}{c}\right)}E_2(p) - N\frac{a+d}{c}E_0(p)E_2(q) -  D(a,c/N;p,q),
\end{align*} 
which gives the second required formula.
\end{proof}

We note that equation \eqref{eq:L} gives the analytic continuation of the L-function
$L_N(A_N,s;p,q)$ to all of $s\in\mathbb{C}$. The following is an immediate consequence of Theorem \ref{Lvalthm} together with Corollary \ref{int}.
\begin{cor}
\label{Lint}
We have $(\alpha-\alpha')L_{N}(A_N,1;0,0)\in \mathcal O_F$.
\end{cor}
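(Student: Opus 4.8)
The plan is to read the corollary off the two identities of Theorem \ref{Lvalthm} specialized to $(p,q)=(0,0)$, where the congruence assumptions \eqref{eq:assume1}--\eqref{eq:assume2} hold trivially since $(0,0)A=(0,0)$. First I would take the second identity of the theorem, which at $(p,q)=(0,0)$ expresses $\theta(\alpha-\alpha')L_N(A_N,1;0,0)$ as the explicit combination
\[
-\overline{\left(\tfrac{a+d}{c}\right)}E_2(0)-\tfrac{a+d}{c}E_0(0)E_2(0)-\tfrac1N D(a,c/N;0,0).
\]
The individual terms here are not integral on their own; the key observation is that $N$ times this expression is precisely the value $\Phi(A_N)(0,0)=\Phi\!\left(\begin{smallmatrix}a & bN\\ c/N & d\end{smallmatrix}\right)$ recorded in the proof of Theorem \ref{Lvalthm}. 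Equivalently, combining the first identity with its $N=1$ instance $\theta(\alpha-\alpha')L(A,1;0,0)=\Phi(A)(0,0)$ produces the clean relation
\[
\theta(\alpha-\alpha')\,N\,L_N(A_N,1;0,0)=\Phi_N(A)+\Phi(A)=\Phi(A_N).
\]

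Next I would import the integrality inputs already at hand. By Sczech's Satz 4 (\cite{Scz3}) we have $\Phi(A)\in\mathcal O_F$, and by Corollary \ref{int} we have $\Phi_N(A)\in\mathcal O_F$; hence $\Phi(A_N)=\Phi_N(A)+\Phi(A)\in\mathcal O_F$. Since $\theta=\pm1$ is a unit, the displayed relation then exhibits $(\alpha-\alpha')L_N(A_N,1;0,0)$ as $\theta N^{-1}\Phi(A_N)$ with $\Phi(A_N)\in\mathcal O_F$, so that the asserted integrality of $(\alpha-\alpha')L_N(A_N,1;0,0)$ follows directly; no further analytic input is needed beyond the explicit evaluation in Theorem \ref{Lvalthm}.

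The one genuinely delicate point to verify carefully is the bookkeeping of the factor $N$ relating $\Phi$, evaluated on the matrix $A_N\in\mathrm{SL}_2(\mathcal O_L)$, to the $L$-value $L_N(A_N,1;0,0)$, which is attached to the scaled data $Q_N(m,n)=(mN\alpha+n)(mN\alpha'+n)$. Concretely, one must confirm that the non-integral contributions $\overline{((a+d)/c)}E_2(0)$ and $N^{-1}D(a,c/N;0,0)$ conspire — exactly as in Sczech's integrality of $\Phi$ — into an algebraic integer rather than a mere element of $N^{-1}\mathcal O_F$. I expect this reconciliation of normalizations to be the main obstacle, and it is precisely the content supplied by Corollary \ref{int}, which is why the corollary is then immediate.
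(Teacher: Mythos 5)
Your overall route is the one the paper intends---the paper's proof is literally ``immediate from Theorem \ref{Lvalthm} together with Corollary \ref{int}''---and the relation you extract is correct: specializing to $(p,q)=(0,0)$ and using $\Phi_N(A)=\Phi(A_N)-\Phi(A)$ gives
\[
\theta(\alpha-\alpha')\,N\,L_N(A_N,1;0,0)\;=\;\Phi_N(A)+\Phi(A)\;=\;\Phi(A_N).
\]
(One small misattribution: the ``$N=1$ instance of the first identity'' is the vacuous statement $0=0$; the input $\theta(\alpha-\alpha')L(A,1;0,0)=\Phi(A)$ is \cite[Theorem 3]{Ito}, not a specialization of Theorem \ref{Lvalthm}. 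Equivalently, your displayed relation is just \cite[Theorem 3]{Ito} applied to $A_N$, whose fixed points are $N\alpha, N\alpha'$.) The genuine gap is the final division by $N$. From $\Phi(A_N)\in\mathcal O_F$ (Sczech's Satz 4, since $A_N\in\SL_2(\mathcal O_L)$) the relation yields only $(\alpha-\alpha')L_N(A_N,1;0,0)\in N^{-1}\mathcal O_F$. You flag this yourself in your last paragraph, but the patch you propose---that the needed divisibility ``is precisely the content supplied by Corollary \ref{int}''---does not work: Corollary \ref{int} asserts $\Phi_N(A)=\Phi(A_N)-\Phi(A)\in\mathcal O_F$, which combined with $\Phi(A)\in\mathcal O_F$ gives exactly the integrality of $\Phi(A_N)$ you already used, i.e.\ the same bound $N^{-1}\mathcal O_F$. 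It contains no congruence of the form $\Phi(A_N)\equiv 0\Mod{N\mathcal O_F}$, and no such congruence is established anywhere in the paper or in the cited results.

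Concretely: what your argument (and the paper's stated ingredients) actually proves is $N(\alpha-\alpha')L_N(A_N,1;0,0)=\theta\,\Phi(A_N)\in\mathcal O_F$, equivalently $(N\alpha-N\alpha')L_N(A_N,1;0,0)\in\mathcal O_F$---a statement that follows from \cite[Satz 4]{Scz3} and \cite[Theorem 3]{Ito} applied directly to $A_N$, with no use of $\Phi_N$, Corollary \ref{int}, or Theorem \ref{Lvalthm} at all. To obtain the corollary as literally stated one must additionally show $N\mid\Phi(A_N)$ in $\mathcal O_F$, i.e.\ (via the second identity of Theorem \ref{Lvalthm} at $(0,0)$) that $E_2(0)\,I\!\left(N(a+d)/c\right)$ and $D(a,c/N)$ agree modulo $N\mathcal O_F$, which is a refinement of Satz 4 that neither your proposal nor the quoted corollaries supply. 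So the last step of your proof, as written, would fail; either the corollary should be read with the difference of the fixed points of $A_N$ (in which case your chain, minus the faulty final division, is complete and matches the paper), or a separate mod-$N$ divisibility argument must be added.
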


\begin{rem}\label{Wes}
We conclude with a brief comparison with a construction due to Weselmann. By Remark 2 of \cite[\S5]{Wes}, Sczech's cocycle can be realized as the specialization of the Eisenstein differential form $\omega_\text{Eis}$,
\[
\Phi(A)(u,v)=
2\pi \cdot\mathscr G_\infty \omega_\text{Eis}\big(\text{ch}_{\hat{\mathcal O}+u}\otimes\text{ch}_{\hat{\mathcal O}+v}\big)\left(A,1_f\right),
\]
for all $A\in \mathrm{SL}_2(\mathcal O_F)$ and $u,v\in F/\mathcal O$. Here ch$_X$ is the characteristic function of any subset $X$ of the finite adeles $\A_f$ of $F$, and $\mathscr G_\infty$ induces an isomorphism between de Rham cohomology and group cohomology. But Weselmann's construction relies on a choice of maximal compact subgroup $K$ of $\GL_2(F)$, and by allowing ramification at finite primes, one can deduce a similar 1-cocycle on $\Gamma_0(N)$. But it is not clear whether the cocycle obtained this way is related to ours, though it would be natural to expect that both occur in the associated Eisenstein cohomology class.
\end{rem}

\subsection*{Acknowledgments} This work was begun at the Rethinking Number Theory 2 workshop. K.-L. was supported by NSF grant number DMS-2001909 and W. was supported by NSF grant number DMS-2212924. 

\bibliography{BDR.bib}
\bibliographystyle{abbrv}
\end{document}